\documentclass[12pt]{amsart}

\usepackage{ucs}

\usepackage{amssymb}
\usepackage{amsthm}
\usepackage{amsmath}
\usepackage{latexsym}
\usepackage[cp1251]{inputenc}
\usepackage{graphicx}
\usepackage{wrapfig}
\usepackage{caption}
\usepackage{subcaption}
\usepackage{indentfirst}
\usepackage[left=2.6cm,right=2.6cm,top=2.6cm,bottom=2.6cm,bindingoffset=0cm]{geometry}
\usepackage{enumerate}
\usepackage{makecell}

\DeclareMathOperator{\aut}{Aut}

\DeclareMathOperator{\diag}{Diag}

\DeclareMathOperator{\id}{id}

\DeclareMathOperator{\rk}{rk}
\DeclareMathOperator{\cork}{cork}

\DeclareMathOperator{\Span}{Span}

\DeclareMathOperator{\sym}{Sym}

\def\tm#1{\item[{\rm (#1)}]}

\makeatletter 
\def\@seccntformat#1{\csname the#1\endcsname. } 
\def\@biblabel#1{#1.}

\makeatother

\title{On uniform Higmanian association schemes}

\author{Grigory Ryabov}
\address{Sobolev Institute of Mathematics, Novosibirsk, Russia}
\email{gric2ryabov@gmail.com}

\thanks{The author is supported by the state contract of the Sobolev Institute of Mathematics (project number FWNF-2026-0011)}

\date{}

\newtheorem{prop}{Proposition}[section]

\newtheorem{lemm}[prop]{Lemma}
\newtheorem{theo}[prop]{Theorem}

\newtheorem{corl}[prop]{Corollary}

\theoremstyle{definition}

\newtheorem*{rem}{Remark}

\begin{document}

\maketitle

\begin{abstract}
An imprimitive symmetric indecomposable association scheme of rank~$5$ is said to be \emph{Higmanian}. In the present paper, we prove a necessary and sufficient condition for a Higmanian association scheme with two nontrivial parabolics to be uniform. We also provide examples of uniform Higmanian Cayley schemes.
\\
\\
\textbf{Keywords}: association schemes, $S$-rings, linked systems, difference sets.
\\
\\
\textbf{MSC}: 05B05, 05E30. 
\end{abstract}

\section{Introduction}
In the present paper, we study the property of (association) schemes to be \emph{uniform}. The latter means informally that the scheme is imprimitive and its intersection numbers are divided uniformly over the classes of a nontrivial parabolic the quotient by which is trivial (see Subsection~2.3 for the exact definition). The concept of uniformity was introduced by Higman for schemes of rank~$5$ in the paper~\cite{Hig} and for arbitrary schemes in the unpublished manuscript in~\cite{Hig0}. Higman established the connection of some uniform schemes of rank~$5$ with linked systems of strongly regular designs and finite geometries and gave examples of uniform schemes constructed from the action of several finite simple groups (so-called \emph{triality} schemes).

Throughout the paper, we follow in general the terminology from~\cite{DMM}, where the authors studied uniform schemes motivated by the search for Q-polynomial schemes (see~\cite{BI} for the definition). They provide several criteria for a scheme to be uniform. In particular, the authors proved that a scheme is uniform if and only if it is \emph{dismantlable}~\cite{MMW}, i.e. a restriction of the scheme on a union of any classes of a nontrivial parabolic is a scheme. 

We are interested in uniformity of \emph{Higmanian} schemes, i.e. imprimitive symmetric schemes of rank~$5$ which are not wreath products. Studying such schemes was initiated by Higman in~\cite{Hig}, who was motivated by a connection of these schemes with linked systems of strongly regular designs, finite geometries, and actions of finite simple groups. The term ``Higmanian scheme'' was introduced in~\cite{KMZ}, where several computational results on such schemes were obtained.

More precisely, we deal with the special case of a Higmanian scheme, namely, with a Higmanian scheme with two nontrivial parabolics (schemes from the intersection of Classes I and II in the sense of~\cite{Hig}). In this paper, we consider only such Higmanian schemes and the term ``Higmanian scheme'' means a Higmanian scheme with two nontrivial parabolics from now throughout the paper.

One of the reasons why uniform Higmanian schemes seem interesting is that they are closely related to linked systems of (group) divisible designs. In~\cite[Theorem~4.3]{KhS}, it was proved that from every uniform Higmanian scheme, it is possible to construct a linked system of divisible designs possessing special properties. On the other hand, from every linked system of divisible designs with these properties, it is possible to construct a uniform Higmanian scheme~\cite[Theorem~4.1]{KhS}. In this paper, we provide examples of uniform Higmanian schemes which are Cayley schemes arising from linked systems of relative difference sets (Section~$4$).

All parameters of a Higmanian scheme can be expressed via five of them (see~\cite[Section~10]{Hig}). In the current paper, it is convenient to use the following five parameters of a Higmanian scheme $\mathcal{X}$ with two nontrivial parabolics $E\subsetneq F$ and two basic relations $S$ and $T$ outside $F$ such that the valency $n_T$ of $T$ does not exceed the valency $n_S$ of $S$: the index~$f$ of $F$ in the whole scheme, the index~$m$ of $E$ in $F$, the valency~$n$ of $E$, the number~$k$ of neighbors of a given point in $S$ that lie in the same class of $F$, and the intersection number $t=p_{TS}^{T}$ (see Subsection~2.2 for the exact definitions). Throughout this paper, the numbers~$(f,m,n,k,t)$ are called the \emph{parameters} of $\mathcal{X}$. By the definitions of the parameters, we have $k\leq mn$. Since the parabolics $E\subsetneq F$ are nontrivial, we have $f,m,n\geq 2$. A criterion of uniformity for a Higmanian scheme can be expressed as an equality depending on the above five parameters.

\begin{theo}\label{main}
A Higmanian scheme with parameters~$(f,m,n,k,t)$, $f,m,n\geq 2$, is uniform if and only if 
\begin{equation}\label{uniform}
t=\frac{k(f-2)}{mn}\left(mn-k\pm \sqrt{\frac{k(mn-k)}{m(n-1)}}\right).
\end{equation}
\end{theo}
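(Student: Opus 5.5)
We follow the uniformity criterion of~\cite{DMM} in its dismantlability form: $\mathcal{X}$ is uniform if and only if, for every set $U$ of classes of the nontrivial parabolic $F$ whose quotient is trivial (in our setting this is $F$, of index $f$), the restriction of $\mathcal{X}$ to $\bigcup U$ is a scheme. Since the quotient $\mathcal{X}_{\Omega/F}$ is a complete graph on $f$ vertices, it is enough to control, for three distinct classes $\Delta_1,\Delta_2,\Delta_3$ of $F$, how the neighbours of a point are distributed over them.

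First we write down the intersection numbers of $\mathcal{X}$ in terms of $(f,m,n,k,t)$, using the relations $1_\Omega, E\setminus 1_\Omega, F\setminus E, S, T$. We have $n_S+n_T=(f-1)mn$, and each point has exactly $k$ $S$-neighbours and $mn-k$ $T$-neighbours in every other class of $F$. Thus $S$ restricted to a pair of classes is a biregular bipartite graph, indeed a divisible design relative to $E$. Standard counting, the associativity identities $\sum_r p_{ab}^r p_{rc}^s=\sum_r p_{bc}^r p_{ar}^s$, and the requirement that the eigenmatrix have integral multiplicities let us express every $p_{ab}^c$ through the five parameters. In particular we obtain the local constant $\lambda$: two points of one class that lie in distinct $E$-classes have $\lambda=\frac{k(k-n?)}{\cdot}$ common $S$-neighbours in a fixed other class, determined by the divisible-design equation $k^2=k+(n-1)\lambda_1+n(m-1)\lambda_2$ together with the spectral constraint.

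Next we use the uniformity condition of~\cite{DMM}: the intersection numbers $p_{ab}^c$ with $a,b,c\notin F$ must split as $(f-2)$ times a constant that does not depend on which third class is used. Concretely, for $x\in\Delta_1$ and $y\in\Delta_2$ with $(x,y)\in T$, the number $|S(x)\cap T(y)\cap\Delta_3|$ must be the same for every $\Delta_3$, and hence equals $t/(f-2)$. Counting over the pairs of classes $\Delta_1,\Delta_2$ shows that uniformity is equivalent to the condition that the three-class configuration is itself determined by the two-class divisible design. This reduces the whole problem to computing $u=t/(f-2)$ from the parameters of the design on $\Delta_1\cup\Delta_3$ and $\Delta_2\cup\Delta_3$.

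The core computation, which we expect to be the main obstacle, runs as follows. Write $M_{ij}$ for the $S$-incidence matrix between $\Delta_i$ and $\Delta_j$. Uniformity amounts to $M_{13}M_{32}=\alpha M_{12}+\beta(J-M_{12})$ modulo the $E$-structure, where $\alpha,\beta$ are constants. This is the linked-system condition of~\cite{KhS}. Since $M_{13}M_{31}=kI+\lambda_1(A_E)+\lambda_2(A_{F\setminus E})$, the matrix $M_{ij}$ has nontrivial singular values $\sqrt{k-\lambda_1}$ on the $E$-orthogonal part. Taking eigenvalues, we get $\alpha-\beta=\pm\sqrt{k-\lambda_1}$. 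The row sums give $\alpha k+\beta(mn-k)=k^2$, and $t/(f-2)=$ the value of the corresponding entry for a $T$-pair. Solving this system and substituting $k-\lambda_1=\frac{k(mn-k)}{m(n-1)}$ (from the design equations) yields exactly~\eqref{uniform}.

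For the converse, assume~\eqref{uniform} holds. Summing the counts over the $f-2$ third classes and using a variance (Cauchy--Schwarz) argument shows that the per-class counts all attain the average. Equality then forces the constancy, hence uniformity. The delicate point is to justify the sign $\pm$ and to show that the variance vanishes exactly when $t$ takes this value.
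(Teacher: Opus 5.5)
\textbf{Forward direction.} Your uniform $\Rightarrow$ \eqref{uniform} argument is sound and differs from the paper's. The paper goes through the Q-Higmanian criterion and the multiplicities $m_1,m_3$; you work directly with condition (2). Condition (2) gives $M_{13}M_{32}=\alpha M_{12}+\beta(J-M_{12})$. Comparing $M_{13}M_{32}M_{23}M_{31}$ with $(\alpha-\beta)^2M_{12}M_{21}$ on functions orthogonal to $E$-constant ones gives $(\alpha-\beta)^2=k-\lambda_1$. Row sums give $\beta mn+(\alpha-\beta)k=k^2$, and $t/(f-2)=k-\beta$. Together these do yield \eqref{uniform}.

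Two repairs are needed.
\begin{enumerate}
\item The claim that $S$ between any two $F$-classes is a divisible design is not a consequence of ``standard counting''. The intersection numbers only control $\sum_{\Gamma\neq\Delta}M_{\Delta\Gamma}M_{\Gamma\Delta}$, not the individual terms. In this direction you must take it from condition (2) with $\Lambda=\Delta$.
\item The placeholder for $\lambda_1$ should be replaced as follows. Since $\mathcal{X}_{\Omega/E}$ has rank $3$, $|xS\cap\Lambda|=k/m$ for every $E$-class $\Lambda$ outside the $F$-class of $x$. Hence $k+(n-1)\lambda_1=k^2/m$, and so $k-\lambda_1=\frac{k(mn-k)}{m(n-1)}$.
\end{enumerate}

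\textbf{Converse: a genuine gap.} The direction \eqref{uniform} $\Rightarrow$ uniform is not proved. The variance idea has nothing to act on. For a fixed $T$-pair $(x,y)$, the quantity $\sum_\Lambda |S(x)\cap T(y)\cap\Lambda|^2$ counts four-point configurations confined to single classes, and the intersection numbers do not determine these. You name no other quantity whose square-sum is computable. Moreover, what must be proved here includes the per-pair divisible design property, which you had assumed.

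The missing idea is spectral. Let $W_\Delta$ be the functions on $\Delta$ orthogonal to $E$-constant ones. On $\bigoplus_\Delta W_\Delta$, $A_S$ has exactly the eigenvalues $x_1,x_3$ of Lemma~\ref{eigenmatrix}, with
\begin{itemize}
\item $x_1x_3=-a$, where $a=\frac{(f-1)k(mn-k)}{m(n-1)}$;
\item $x_1+x_3=\frac{tmn}{k}-(f-2)(mn-k)$.
\end{itemize}
Then \eqref{uniform} is exactly $x_1^2=(f-1)a$, i.e.\ $m_3=(f-1)m_1$, equivalently $m_1=m(n-1)$. The paper at this point invokes \cite[Theorem~4.16]{DMM}.

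A self-contained alternative is a rigidity argument:
\begin{enumerate}
\item $A_S,A_T$ have zero diagonal blocks, and $A_0,A_1,A_4$ act on $W_\Delta$ as scalars. So the idempotent $E_1$ has diagonal blocks $\frac{m_1}{fm(n-1)}I$ on each $W_\Delta$.
\item Hence $m_1\ge m(n-1)$ always, since $\operatorname{rk}E_1$ is at least the rank of a diagonal block. This is the correct ``Cauchy--Schwarz'' inequality. It is equivalent to $|x_1+x_3|\le (f-2)\sqrt{k(mn-k)/(m(n-1))}$.
\item In the equality case, $E_1=VV^{T}$ with each $\sqrt{f}\,V_\Delta$ orthogonal. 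This gives $E_1^{\Delta\Lambda}E_1^{\Lambda\Gamma}=\frac1f E_1^{\Delta\Gamma}$, including when $\Delta=\Gamma$.
\item $A_S^{\Delta\Gamma}$ equals $(x_1-x_3)E_1^{\Delta\Gamma}$ on $W_\Gamma$ and acts as $\frac{k}{mn}J$ on $E$-constant functions. So step 3 yields condition (2) for all triples of classes, which is uniformity.
\end{enumerate}
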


We prove Theorem~\ref{main} in Section~$3$. To do this, we use a criterion of uniformity for schemes~\cite[Theorem~4.16]{DMM} stating that a scheme is uniform if and only if it is \emph{Q-Higmanian}, i.e. there is an ordering of primitive idempotents of this scheme, satisfying special conditions (see Subsection~2.3 for the exact definitions and formulations). Section~$2$ contains a necessary background of schemes.

The author is very grateful to Prof. M. Muzychuk for the discussions on the subject matters.

\section{Preliminaries}

In what follows, we freely use basic facts concerned with (association) schemes, $S$-rings, and relative and divisible difference sets (RDSs and DDSs, resp.) for a background of which we refer the reader to~\cite{BI,CP},~\cite{MP,Ry},~and~\cite{MR,Pott}, respectively. We also recall notation and statements necessary for the explanation of the material of this paper.

\subsection{Schemes}

Let $\mathcal{X}$ be an (association) scheme on a finite set $\Omega$ of cardinality $v$ with the set of basic relations $\mathcal{R}=\mathcal{R}(\mathcal{X})$. Given $R\in \mathcal{R}$, the relation inverse to $R$ is denoted by $R^*$. We assume that the set $\mathcal{R}$ is ordered as 
$$\mathcal{R}=\{R_0,R_1,\ldots,R_d\},$$
 where $R_0$ is the diagonal of $\Omega\times \Omega$ and $d+1=\rk(\mathcal{X})$ is the rank of $\mathcal{X}$. This ordering induces the ordering $A_0,\ldots,A_d$ of the set of adjacency matrices, where $A_i$ is the adjacency matrix of $R_i$, $i\in\{0,\ldots,d\}$. We also assume that there is an ordering 
$$\mathcal{E}=\{E_0,E_1,\ldots,E_d\}$$ 
of the set $\mathcal{E}$ of primitive idempotents of $\mathcal{X}$, where $vE_0$ is the all-identities matrix. The first eigenmatrix of $\mathcal{X}$ with respect to these orderings of the basic relations and primitive idempotents is denoted by $P$. The elements of $P$ are denoted by $P_{ij}$. Put also 
$$I_0=\{0,1,\ldots,d\}.$$

Given $R=R_i,S=R_j,T=R_k\in \mathcal{R}$ ($E_i,E_j,E_k\in \mathcal{E}$, resp.), the corresponding intersection number (Krein parameter, resp.) of $\mathcal{X}$ is denoted by $p_{RS}^T=p_{ij}^k$ (by $q_{ij}^k$, resp.). The valency of a basic relation $R=R_i\in \mathcal{R}$ and a multiplicity of a primitive idempotent $E_j$ are denoted by $n_R=n_i$ and $m_j$, respectively. If $\mathcal{X}$ is symmetric, i.e. $R=R^*$ for every $R\in \mathcal{R}$, then all entries of the matrix $P$ are real. Therefore
\begin{equation}\label{krein} 
q_{ij}^k=\frac{m_im_j}{v}\sum \limits_{l\in I_0} \frac{P_{il}P_{jl}P_{kl}}{n_l^2}
\end{equation}
and
\begin{equation}\label{mult} 
m_j=v\left(\sum \limits_{i\in I_0} \frac{P_{ji}^2}{n_i}\right)^{-1}
\end{equation}
for all $i,j,k\in I_0$. Eqs.~\eqref{krein} and \eqref{mult} are special cases of~\cite[Theorem~3.6(i)]{BI} and~\cite[Theorem~4.1(i)]{BI}, respectively. 

If $E$ is a parabolic of $\mathcal{X}$, then the set of all classes of $E$ is denoted by $\Omega/E$. All classes of $E$ have the same size equal to the valency of $E$ and denoted by $n_E$. Put $\rk(E)=\rk(\mathcal{X}_{\Delta})$, where $\Delta\in \Omega/E$ and $\mathcal{X}_{\Delta}$ is the restriction of $\mathcal{X}$ on $\Delta$, and $\cork(E)=\rk(\mathcal{X}_{\Omega/E})$, where $\mathcal{X}_{\Omega/E}$ is the quotient of $\mathcal{X}$ modulo $E$. It is easy to verify that $\rk(E)$ does not depend on $\Delta\in \Omega/E$.

\subsection{Higmanian schemes}
Recall that a \emph{Higmanian scheme} is defined to be an imprimitive symmetric scheme of rank~$5$ which is indecomposable, i.e. not a wreath product of schemes. Throughout the paper, by a Higmanian scheme, we mean only Higmanian schemes $\mathcal{X}$ with two proper nontrivial parabolics $E\subsetneq F$ (schemes from the intersection of Classes I and~II in the sense of~\cite{Hig}). In this case, $\rk(E)=\cork(F)=2$ and $\rk(F)=\cork(E)=3$. In particular, the quotient scheme $\mathcal{X}_{\Omega/E}$ is the wreath product of two trivial schemes.

Put
$$f=\frac{|\Omega|}{n_F},~m=\frac{n_F}{n_E},~n=n_E.$$
As $E\subsetneq F$ are proper and nontrivial, we have $m,n,f\geq 2$. There are exactly two basic relations of $\mathcal{X}$, say $S$ and $T$, lying outside $F$. Without loss of generality, we assume that $n_T\leq n_S$. Due to~\cite[Exercise~2.7.11(1)]{CP} and the condition $\cork(F)=2$, the number 
$$k=|\alpha S\cap \Delta|,~\Delta\in \Omega/F,$$
does not depend on $\alpha\in \Omega$ and $\Delta\in \Omega/F$. Clearly, $k\leq mn$. With the above notation,
$$n_S=k(f-1)~\text{and}~n_T=(mn-k)(f-1).$$
Since $n_T\leq n_S$, we have $mn-k\leq k$. Finally, put
$$t=p_{TS}^{T}.$$ 
It can be verified that all intersection numbers of $\mathcal{X}$ can be expressed via $m$, $n$, $f$, $k$, and $t$ (see~\cite[Section~10]{Hig}). We say that the numbers $(f,m,n,k,t)$ are \emph{parameters} of $\mathcal{X}$.

\subsection{Uniform and Q-Higmanian schemes}

Throughout this subsection, $\mathcal{X}$ is assumed to be commutative, i.e. $p_{RS}^T=p_{SR}^T$ for all $R,S,T\in \mathcal{R}$. This always holds if $\mathcal{X}$ is symmetric. If $E$ is a parabolic of $\mathcal{X}$ and $\Delta,\Gamma\in \Omega/E$, then put
$$I(\Delta,\Gamma)=\{i\in I_0:~R_i\cap (\Delta\times\Gamma)\neq \varnothing\}.$$ 
If $A$ is a $(v\times v)$-matrix whose elements are indexed by the elements of $\Omega$ and $\Delta,\Gamma\subseteq \Omega$, then $A^{\Delta\Gamma}$ is defined to be the $(v\times v)$-matrix such that
$$(A^{\Delta\Gamma})_{xy}=
\begin{cases}
A_{xy},~\text{if}~x\in \Delta,~y\in \Gamma,\\
0, \text{otherwise}.
\end{cases}
$$

The scheme $\mathcal{X}$ is said to be \emph{uniform}~\cite[Definition~3.2]{DMM} if $\mathcal{X}$ has a nontrivial parabolic $E$ satisfying the following conditions:
\begin{enumerate} 

\tm{1} $\cork(E)=2$; 

\tm{2} for all $\Delta,\Gamma,\Lambda\in \Omega/E$, $i\in I(\Delta,\Gamma)$, and $j\in I(\Gamma,\Lambda)$, there are nonnegative integers $a_{ij}^k$, $k\in I_0$, such that
$$A_i^{\Delta\Gamma}A_j^{\Gamma\Lambda}=\sum \limits_{k\in I_0} a_{ij}^k A_k^{\Delta\Lambda}.$$
\end{enumerate}

The scheme $\mathcal{X}$ is imprimitive if and only if there exists $I\subseteq I_0$ such that $\langle E_i:~i\in I\rangle$ is a proper nontrivial $\circ$-subalgebra of the adjacency algebra $\mathcal{M}=\mathcal{M}(\mathcal{X})$ of $\mathcal{X}$. The set of all such $I$ is denoted by $\mathcal{I}$. With each $I\in \mathcal{I}$, one can associate a unique parabolic $E=E(I)$ of $\mathcal{X}$.

Given $I\in \mathcal{I}$, let us define a binary relation $\sim_I$ on $I_0$ by
$$i\sim_I j~\text{if and only if}~q_{ij}^{k}\neq 0~\text{for some}~k\in I.$$
Due to~\cite[Lemma~2.1]{DMM}, the relation~$\sim_I$ is an equivalence relation on $I_0$.

The scheme $\mathcal{X}$ is said to be \emph{Q-Higmanian} (see~\cite[Definition~4.6]{DMM}) if there exist an ordering of $I_0$ and $l\in\{1,\ldots,\frac{d}{2}\}$ such that:
\begin{enumerate}
\tm{1} $I=\{0,d\}\in \mathcal{I}$;

\tm{2} the equivalence classes of $\sim_I$ are $\{i,d-i\}$, $i\in\{0,\ldots,l-1\}$, and $\{i\}$, $i\in\{l,\ldots,d-l\}$;

\tm{3} $m_{d-i}=(f-1)m_i$ for $i\in\{0,\ldots,l-1\}$, where $f=|\Omega/E(I)|$.
\end{enumerate}

\noindent Clearly, a Q-Higmanian scheme is imprimitive. We use the term ``Q-Higmanian scheme'' in this paper instead of ``Q-Higman scheme'' used in~\cite{DMM} following in general the terminology from the paper~\cite{KMZ}, where the term ``Higmanian scheme'' was introduced.

\begin{lemm}\cite[Theorem~4.16]{DMM}\label{criterion}
A scheme is uniform if and only if it is Q-Higmanian.
\end{lemm}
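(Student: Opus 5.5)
Since the lemma is quoted from~\cite[Theorem~4.16]{DMM}, the plan is to reconstruct its argument from the block structure of the adjacency algebra relative to the parabolic $E$. In both directions the parabolic is the same object. By the correspondence $I\mapsto E(I)$ recalled above, a nontrivial parabolic $E$ with $\cork(E)=2$ corresponds to a set $I\in\mathcal{I}$ with $|I|=2$, since the quotient $\mathcal{X}_{\Omega/E}$ has exactly two primitive idempotents. Writing $I=\{0,d\}$, we have
$$E_0+E_d=\tfrac1{n_E}A_E,$$
where $A_E$ is the adjacency matrix of $E$, and $E_d=\tfrac1{n_E}A_E-\tfrac1vJ$. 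So condition~(1) of each definition is equivalent to condition~(1) of the other after reordering $I_0$. The real content is therefore the equivalence between the block condition~(2) in the definition of a uniform scheme and conditions~(2)--(3) of the Q-Higmanian definition.

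The first key step is to reinterpret $\sim_I$ spectrally. Using $E_i\circ E_j=\frac1v\sum_k q_{ij}^kE_k$ and the fact that $E_0,E_d$ span the Hadamard-closed subalgebra $\langle J,A_E\rangle$, one gets that $i\sim_I j$ if and only if $\langle E_i\circ E_j, A_E\rangle\neq0$. This says that the trace of $E_i(E_j\circ A_E)$ is nonzero, i.e. that $E_iE_j^{\Delta\Delta}\neq0$ for some $\Delta\in\Omega/E$. Thus the $\sim_I$-classes record how each idempotent $E_i$ interacts with the block-diagonal projection $X\mapsto \sum_{\Delta}X^{\Delta\Delta}$.

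Next I would introduce, for $f=|\Omega/E|$, the algebra $\mathcal{B}$ spanned by all $A_i^{\Delta\Gamma}$. Uniformity (condition~(2)) says exactly that $\mathcal{B}$ is closed under multiplication, with structure constants $a_{ij}^k$ independent of $\Delta,\Gamma,\Lambda$. Then $\mathcal{B}$ should be isomorphic to $\mathcal{M}_0\otimes M_f(\mathbb{C})$-like block data, where $\mathcal{M}_0$ is spanned by the restricted pieces. This isomorphism is the identification one has to verify.

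For the direction uniform $\Rightarrow$ Q-Higmanian, I would decompose each $E_i$ into blocks $E_i^{\Delta\Gamma}$ and use the symmetric group acting on block labels through the constants $a_{ij}^k$. Each $E_i$ should then lie either in the ``diagonal-type'' part, giving a singleton $\sim_I$-class, or be paired with a unique $E_{d-i}$. In the paired case the two idempotents are the images of one idempotent of $\mathcal{B}$ under the two characters of the trivial quotient, namely the trivial character and the $(f-1)$-dimensional one. That pairing gives $m_{d-i}=(f-1)m_i$ directly.

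For the converse, I would start from the ordering in conditions~(1)--(3) and define block idempotents from the pairs: $(E_i+E_{d-i})$ restricted to blocks for $i<l$, and $E_i$ for $l\le i\le d-l$. I would then use the multiplicity condition~(3) and the identity $\sum_k q_{ij}^k m_k=m_im_j$ to show these restrictions are, up to scalars, constant across block pairs. This yields structure constants $a_{ij}^k$ independent of $\Delta,\Gamma,\Lambda$, i.e. condition~(2) of uniformity.

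The main obstacle is precisely this converse step. One must show that the multiplicity ratio $f-1$ forces each block $E_i^{\Delta\Gamma}$ to be determined by a single matrix independent of $(\Delta,\Gamma)$, up to the scalar $\pm$ pattern of the quotient characters. I would attack it via a positive-semidefiniteness/trace argument. Compute $\operatorname{tr}\bigl(E_i^{\Delta\Gamma}(E_i^{\Delta\Gamma})^{T}\bigr)$ through Krein parameters with $k\in I$, and show that the Cauchy--Schwarz inequality between blocks becomes an equality exactly when $m_{d-i}=(f-1)m_i$. Equality in Cauchy--Schwarz then forces the blocks to be proportional, which is what the block condition needs.
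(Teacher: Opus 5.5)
The paper gives no proof of this lemma; it is quoted from \cite[Theorem~4.16]{DMM}. Your argument therefore has to stand on its own, and as written it has genuine gaps in both directions.

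\textbf{Uniform $\Rightarrow$ Q-Higmanian.} Your preliminary steps are fine: matching $\cork(E)=2$ with $|I|=2$, and showing $i\sim_I j$ iff $\langle E_i\circ E_j,A_E\rangle\neq 0$. After that, the load-bearing claims are asserted rather than proved, and one is false if taken literally.

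First, $\mathcal{B}$ is not ``$\mathcal{M}_0\otimes M_f(\mathbb{C})$-like''. Its dimension is $f\rk(E)+f(f-1)(d+1-\rk(E))$. What is true is the following. In any scheme, $\Phi(X)=X\circ A_E$ maps $\mathcal{M}$ onto $\mathcal{M}_E=\Span\{A_i:R_i\subseteq E\}$, and $\Phi(X\varepsilon)=\Phi(X)\varepsilon$ for $\varepsilon\in\mathcal{M}_E$. Moreover $\Phi(E_j)=\frac1f\bigl(E_j+\sum_k q_{jd}^kE_k\bigr)$. Hence, in the symmetric case, the $\sim_I$-classes are exactly the supports of the primitive idempotents $\varepsilon$ of $\mathcal{M}_E$. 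There are $\rk(E)$ of them, and each class sum $\varepsilon$ is block diagonal.

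Second, uniformity is needed precisely to bound class sizes by $2$, and your sentence ``each $E_i$ should then lie either in the diagonal-type part \dots\ or be paired'' skips this. The argument goes as follows. Since $\varepsilon^{\Delta\Delta}\mathcal{B}\varepsilon^{\Delta\Delta}=\mathbb{C}\varepsilon^{\Delta\Delta}$, every corner $\varepsilon^{\Delta\Delta}\mathcal{B}\varepsilon^{\Gamma\Gamma}$ of the semisimple algebra $\varepsilon\mathcal{B}\varepsilon$ has dimension at most $1$. The coefficients of $E_j^{\Delta\Gamma}$ in the basis $\{A_k^{\Delta\Gamma}\}$ do not depend on $(\Delta,\Gamma)$. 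Together these put all $E_j$ of one class in a $2$-dimensional space. Consequently $\mathcal{B}$ is a sum of one copy of $\mathbb{C}^f$ per singleton class and one copy of $M_f(\mathbb{C})$ per two-element class.

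Third, the pairing does not give $m_{d-i}=(f-1)m_i$ ``directly''. Two complementary idempotents of $M_f(\mathbb{C})$ have ranks $r$ and $f-r$, and nothing in your sketch forces $r=1$. That needs associativity through a third block. Let the block-independent off-diagonal generators satisfy $u_{\Delta\Gamma}u_{\Gamma\Lambda}=\kappa u_{\Delta\Lambda}$ and $u_{\Delta\Gamma}u_{\Gamma\Delta}=\kappa'\varepsilon^{\Delta\Delta}$. Associativity gives $\kappa'=\kappa^2$. Then $Y=\kappa^{-1}\sum_{\Delta\neq\Gamma}u_{\Delta\Gamma}$ satisfies $Y^2=(f-1)\varepsilon+(f-2)Y$. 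It follows that $\{E_i,E_{d-i}\}=\{(\varepsilon+Y)/f,\,((f-1)\varepsilon-Y)/f\}$, whose traces are in ratio $1:(f-1)$. For $f=2$ this step is automatic.

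\textbf{Q-Higmanian $\Rightarrow$ uniform.} Your Cauchy--Schwarz instinct is right, but the target ``the blocks are proportional'' is meaningless, since blocks for different $(\Delta,\Gamma)$ act between different coordinate sets. Two steps are missing.

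First, you need the right inequality and the right conclusion from equality. By the class description above, $E_i+E_{d-i}=\varepsilon$ is block diagonal and $E_i^{\Delta\Delta}=\frac{m_i}{m_i+m_{d-i}}\varepsilon^{\Delta\Delta}$. Write $E_i=ZZ^*$ with $Z^*Z=I_{m_i}$, and set $G_\Delta=Z_\Delta^*Z_\Delta$, where $Z_\Delta$ is the set of rows of $Z$ indexed by $\Delta$. Then:
\begin{itemize}
\item $\sum_\Delta G_\Delta=I_{m_i}$;
\item $\operatorname{tr}G_\Delta=m_i/f$;
\item $\operatorname{tr}G_\Delta^2=\operatorname{tr}(E_i^{\Delta\Delta})^2=m_i^2/(f(m_i+m_{d-i}))$.
\end{itemize}
The inequality $(\operatorname{tr}G_\Delta)^2\le m_i\operatorname{tr}G_\Delta^2$ reads exactly $m_{d-i}\le(f-1)m_i$. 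Equality forces $G_\Delta=\frac1f I_{m_i}$, that is, $E_i^{\Delta\Gamma}E_i^{\Gamma\Lambda}=\frac1fE_i^{\Delta\Lambda}$ for all $\Delta,\Gamma,\Lambda$. This multiplicative consistency, not proportionality, is what the equality case gives you.

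Second, you never pass from idempotent blocks to adjacency blocks, which is what condition~(2) of uniformity is about. This needs a count. There are $\rk(E)=d+1-l$ classes, so $l$ equals the number of basic relations outside $E$. Singleton idempotents are block diagonal, and $E_{d-i}^{\Delta\Gamma}=-E_i^{\Delta\Gamma}$ for $\Delta\neq\Gamma$. Hence, for $\Delta\neq\Gamma$, the $l$ linearly independent matrices $A_k^{\Delta\Gamma}$ span the same space as $\{E_i^{\Delta\Gamma}:i<l\}$, with a transition matrix independent of $(\Delta,\Gamma)$. The same holds on the diagonal blocks. Only then do the relations above, together with $E_i^{\Delta\Gamma}E_j^{\Gamma\Lambda}=0$ for $i,j$ in different classes, yield $A_i^{\Delta\Gamma}A_j^{\Gamma\Lambda}=\sum_k a_{ij}^kA_k^{\Delta\Lambda}$ with block-independent constants.

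The identity $\sum_k q_{ij}^km_k=m_im_j$ that you propose to use supplies neither of these steps.
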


\subsection{Cayley schemes and $S$-rings}

Let $G$ be a finite group and $\mathbb{Z}G$ the group ring over the integers. The identity element of $G$ is denoted by $e$. Given $X\subseteq G$, we set $$\underline{X}=\sum \limits_{x\in X} {x}\in\mathbb{Z}G,~X^{(-1)}=\{x^{-1}:x\in X\},~X^\#=X\setminus\{e\}.$$
It is easy to verify that
\begin{equation}\label{easy}
\underline{X}\cdot \underline{H}=\underline{H}\cdot \underline{X}=|X|\underline{H}
\end{equation}
for all $X\subseteq G$ and $H\leq G$ such that $X\subseteq H$.

Due to~\cite[Theorem~2.4.16]{CP}, there is a one-to-one correspondence 
$$\mathcal{A}\mapsto \mathcal{X}=\mathcal{X}(\mathcal{A})$$ 
between the $S$-rings and Cayley schemes over $G$. This correspondence induces the bijection $\rho$ between the set of all unions of basic sets of $\mathcal{A}$ and the set of all unions of basic relations of $\mathcal{X}$. Moreover, $\rho$ maps the basic sets of $\mathcal{A}$ to the basic relations of $\mathcal{X}$ and the $\mathcal{A}$-subgroups to the parabolics of $\mathcal{X}$. The set of all basic sets of $\mathcal{A}$ is denoted by $\mathcal{S}(\mathcal{A})$. Given $X,Y,Z\in \mathcal{S}(\mathcal{A})$, we have $\rho(X^{(-1)})=\rho(X)^*$ and $p_{XY}^Z=p_{\rho(X)\rho(Y)}^{\rho(Z)}$, where $p_{XY}^{Z}$ is the structure constant of $\mathcal{A}$ corresponding to $X,Y,Z$. Due to~\cite[Eq.~2.1.14]{CP}, we have 
\begin{equation}\label{triangle0}
|Z|p^{Z^{(-1)}}_{XY}=|X|p^{X^{(-1)}}_{YZ}=|Y|p^{Y^{(-1)}}_{ZX}
\end{equation}
for all $X,Y,Z\in \mathcal{S}(\mathcal{A})$.

Two $S$-rings are said to be \emph{Cayley isomorphic} if there exists an isomorphism between the underlying groups that maps the basic sets of one $S$-ring to the basic sets of the other one. In this case, the corresponding Cayley schemes are isomorphic. 

A symmetric $S$-ring $\mathcal{A}$ of rank~$5$ is said to be \emph{Higmanian} if $G$ has proper nontrivial $\mathcal{A}$-subgroups $L$ and $U$ such that $L<U$, $\rk(\mathcal{A}_L)=\rk(\mathcal{A}_{G/U})=2$, and $\rk(\mathcal{A}_{G/L})=\rk(\mathcal{A}_U)=3$. It follows from the definitions that $\mathcal{A}$ is Higmanian if and only if $\mathcal{X}(\mathcal{A})$ so is. The \emph{parameters} of a Higmanian $S$-ring $\mathcal{A}$ are $f=|G:U|$, $m=|U:L|$, $n=|L|$, $k=|X\cap Ug|$ not depending on $g\in G$, and $t=p_{YX}^Y$, where $X,Y\in \mathcal{S}(\mathcal{A})$ outside $U$ are such that $|Y|\leq |X|$. Clearly, the parameters of the Higmanian $S$-ring $\mathcal{A}$ and the parameters of the Higmanian scheme $\mathcal{X}(\mathcal{A})$ coincide. We say that the $S$-ring $\mathcal{A}$ is \emph{uniform} (\emph{Q-Higmanian}, resp.) if so is $\mathcal{X}$. The following statement immediately follows from Theorem~\ref{main}.

\begin{lemm}\label{uniformsring}
A Higmanian $S$-ring with parameters~$(f,m,n,k,t)$ is uniform if and only if Eq.~\eqref{uniform} holds.
\end{lemm}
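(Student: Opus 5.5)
This lemma is a direct translation of Theorem~\ref{main} through the correspondence between $S$-rings and Cayley schemes, so the plan is simply to transport the statement along the bijection $\mathcal{A}\mapsto\mathcal{X}(\mathcal{A})$. Let $\mathcal{A}$ be a Higmanian $S$-ring over $G$ with parameters $(f,m,n,k,t)$, and put $\mathcal{X}=\mathcal{X}(\mathcal{A})$.

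\textbf{Step 1: $\mathcal{X}$ is a Higmanian scheme with two nontrivial parabolics.} The bijection $\rho$ sends basic sets to basic relations, so $\rk(\mathcal{X})=\rk(\mathcal{A})=5$. Since $\rho(X^{(-1)})=\rho(X)^*$ and $\mathcal{A}$ is symmetric, $\mathcal{X}$ is symmetric. The $\mathcal{A}$-subgroups $L<U$ are sent to parabolics $E\subsetneq F$. The ranks of restrictions and quotients are preserved, because the restriction of $\mathcal{X}$ to a coset of $L$ (resp.\ $U$) and the quotient modulo $E$ (resp.\ $F$) are the Cayley schemes of $\mathcal{A}_L$, $\mathcal{A}_U$, $\mathcal{A}_{G/L}$, $\mathcal{A}_{G/U}$. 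Hence $\rk(E)=\cork(F)=2$ and $\rk(F)=\cork(E)=3$. The scheme $\mathcal{X}$ is imprimitive because $E$ is a nontrivial parabolic. It is not a wreath product, since a wreath product decomposition would give a parabolic whose complement is a single basic relation of full valency across classes, which contradicts $\cork(F)=2$ with two relations $S,T$ outside $F$. This is the remark already made in the text that $\mathcal{A}$ is Higmanian if and only if $\mathcal{X}(\mathcal{A})$ is.

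\textbf{Step 2: the parameters coincide.} Here $f=|G:U|=|\Omega|/n_F$, $m=|U:L|=n_F/n_E$, and $n=|L|=n_E$. The basic sets $X,Y$ outside $U$ go to $S=\rho(X)$ and $T=\rho(Y)$ with $n_T=|Y|\le|X|=n_S$. For the neighbours of a point $\alpha=g$ in a class $\Delta=Ug'$, the set $\alpha S\cap\Delta$ corresponds to $Xg\cap Ug'$. Its size equals $|X\cap Ug'g^{-1}|=k$, using the convention for right multiplication in Cayley schemes. Finally, $t=p_{YX}^Y=p_{TS}^T$, because $\rho$ preserves structure constants.

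\textbf{Step 3: conclude.} By definition, $\mathcal{A}$ is uniform exactly when $\mathcal{X}$ is. Applying Theorem~\ref{main} to $\mathcal{X}$, whose parameters are $(f,m,n,k,t)$ with $f,m,n\ge2$, gives that $\mathcal{X}$ is uniform if and only if Eq.~\eqref{uniform} holds, which proves the lemma. There is no real obstacle here. The only point needing care is the bookkeeping in Step 2, namely checking that the cosets of $U$ are indeed the classes of $F$, so that $k$ is read off correctly.
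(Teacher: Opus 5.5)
Your proposal is correct and is essentially the paper's argument: the paper notes that $\mathcal{A}$ is Higmanian if and only if $\mathcal{X}(\mathcal{A})$ is, that the parameters coincide, and that uniformity of $\mathcal{A}$ is defined through $\mathcal{X}(\mathcal{A})$, so the lemma follows immediately from Theorem~\ref{main}. One small slip in Step~1 is that a wreath product over a parabolic whose quotient has rank greater than $2$ has more than one basic relation outside that parabolic, so your argument only rules out a decomposition over $F$; to rule out one over $E$, note that there are three basic relations $F\setminus E$, $S$, $T$ outside $E$ but only $\cork(E)-1=2$ nontrivial relations in $\mathcal{X}_{\Omega/E}$ (equivalently, $S$ and $T$ have the same image there), so they cannot be unions of full blocks $\Delta\times\Gamma$.
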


\subsection{Relative difference sets and linked systems}

In this paper, we deal with closed linked systems of relative difference sets (RDSs) in the sense of~\cite{MR}. Let us recall the definitions. Let $G$ be a finite group, $N\leq G$, and $\mathcal{L}=\{X_{\alpha}:~\alpha\in W\}$ a collection of RDSs in $G$ with the same forbidden subgroup~$N$ and parameters~$(m,n,k,\lambda)$ indexed by elements of a finite set $W$ with $|W|=w\geq 2$. The collection $\mathcal{L}$ is called a \emph{closed linked system} of RDSs relative to $N$ if there exist a bijection $\chi=\chi_{\mathcal{L}}:W \rightarrow W$, a function $\psi=\psi_{\mathcal{L}}:(W\times W)\setminus \{(\alpha,\chi(\alpha))\in W\times W:~\alpha\in W\}\rightarrow W$ and nonnegative integers $\mu$ and $\nu$ such that $X_{\alpha}^{(-1)}=X_{\chi(\alpha)}$ and
\begin{equation}\label{linked}
\underline{X_\alpha}\cdot\underline{X_\beta}=
\begin{cases}
ke+\lambda(\underline{G\setminus N}),~\beta=\chi(\alpha),\\
\mu \underline{X_{\psi(\alpha,\beta)}}+\nu (\underline{G\setminus X_{\psi(\alpha,\beta)}}),~\beta\neq \chi(\alpha).
\end{cases}
\end{equation}
for all $\alpha,\beta\in W$. We say that $(m,n,k,\lambda,w,\mu,\nu)$ are \emph{parameters} and $(\chi,\psi)$ is a \emph{pair of characteristic functions} of $\mathcal{L}$. 

Recall that an RDS is called \emph{semiregular} if $k=m$. If $\mathcal{L}$ is a closed linked system of semiregular RDSs, then $\mathcal{L}$ has parameters $(n\lambda,n,n\lambda,n,w,\mu,\nu)$, where 
\begin{equation}\label{munu}
\mu=\frac{1}{n}\left(n\lambda\pm (n-1)\sqrt{n\lambda}\right)~\text{and}~\nu=\frac{1}{n}\left(n\lambda\mp \sqrt{n\lambda}\right).
\end{equation}

If $\mathcal{L}$ is a closed linked system of semiregular RDSs indexed by the elements of the set $W$, then one can define the \emph{associate group} on the set $W^\infty=W\cup \{\infty\}$ (see~\cite[Section~4.2]{MR}), where $\infty $ is the identity element and the inversion operation $\widehat{\chi}$ and binary operation $\widehat{\psi}$ are induced by $\chi$ and $\psi$, respectively, as follows: 
$$\widehat{\chi}(\alpha)=
\begin{cases}
\chi(\alpha),~\alpha\neq \infty,\\
\infty,~\alpha=\infty,\\
\end{cases}$$ 
$$\widehat{\psi}(\alpha,\beta)=
\begin{cases}
\psi(\alpha,\beta),~\beta\neq \chi(\alpha),~\alpha,\beta\neq \infty,\\
\infty,~\beta=\widehat{\chi}(\alpha),\\
\alpha,~\alpha\neq\infty,~\beta=\infty,\\
\beta,~\alpha=\infty,~\beta\neq \infty.
\end{cases}$$

Given a prime power $q$ and a positive integer~$r$, the Heisenberg group over $\mathbb{F}_q$ of dimension~$2r+1$ and the elementary abelian group of order $q^{2r+1}$ are denoted by $H_{2r+1}(q)$ and $E(q^{2r+1})$, respectively. A central product of $r$ quaternion groups of order~$8$ is denoted by $Q_8^{[r]}$. The next lemma collects information from Lemma~5.4, Corollary~5.5, Corollary~6.6, Lemma~6.7, and Proposition~7.7 from~\cite{MR}.

\begin{lemm}\label{linkedexamples}
Let $q=p^i$, $i\geq 1$, be a prime power and $r$ a positive integer. The group $G$ from the first column of Table~$1$ has a closed linked system of semiregular RDSs with parameters and associated group from the second and third columns of Table~$1$, respectively. 
\end{lemm}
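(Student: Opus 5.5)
The statement is a compilation of known constructions, so I do not plan to construct anything new. The plan is to go through Table~$1$ row by row, match each row with the corresponding result among Lemma~5.4, Corollary~5.5, Corollary~6.6, Lemma~6.7 and Proposition~7.7 of~\cite{MR}, and check that the notation there agrees with the definitions of Subsection~2.5. For each group $G$ in the first column (elementary abelian groups $E(q^{2r+1})$ or their relatives, Heisenberg groups $H_{2r+1}(q)$, and the $2$-groups built from $Q_8^{[r]}$), I will locate in~\cite{MR} the forbidden subgroup $N$ and the family $\{X_\alpha:\alpha\in W\}$ of semiregular RDSs relative to $N$. I will then read off the closedness relations, that is, Eq.~\eqref{linked} with an explicit pair of characteristic functions $(\chi,\psi)$.

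The first verification is the parameters. Since each RDS is semiregular, its parameters have the form $(n\lambda,n,n\lambda,n)$ with $n=|N|$ and $|G|=n^2\lambda$. The constants $\mu,\nu$ are then forced by Eq.~\eqref{munu}, and the sign is fixed by whichever of $\mu,\nu$ is larger in the construction. So for each row I only need to confirm three things from~\cite{MR}: the value of $w=|W|$, the value of $\lambda$, and the choice of sign. These should then coincide with the second column of Table~$1$.

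The second verification is the associate group in the third column. Here I will use the definition of $\widehat{\chi}$ and $\widehat{\psi}$ on $W^\infty$ from Subsection~2.5 and compare it with the description of $\psi$ given in~\cite[Section~4.2]{MR} for each family. Typically $W$ is identified with the nonzero elements or the nonidentity elements of a field or group, and $\psi(\alpha,\beta)$ is given by an explicit formula such as $\alpha+\beta$ or $\alpha\beta$. In that case, identifying $\infty$ with the identity element exhibits $W^\infty$ as the group listed in the table, for example an additive group of $\mathbb{F}_q$ or of a vector space over it.

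I expect the only real obstacle to be bookkeeping: the indexing conventions and normalisations in~\cite{MR} may differ from those used here. Examples are whether $X_\alpha^{(-1)}=X_{\chi(\alpha)}$ or a translate, and which sign in Eq.~\eqref{munu} occurs. I would resolve each such discrepancy by a direct check on one product $\underline{X_\alpha}\cdot\underline{X_\beta}$ in each family.
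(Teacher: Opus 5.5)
Your plan matches the paper, which gives no independent argument and simply states that Table~1 collects information from Lemma~5.4, Corollary~5.5, Corollary~6.6, Lemma~6.7 and Proposition~7.7 of~\cite{MR}; checking each row against those results is therefore all that is needed, namely the values of $w$ and $\lambda$, the sign in Eq.~\eqref{munu}, and the associated group $W^\infty$. One small slip: a semiregular RDS has parameters $(n\lambda,n,n\lambda,\lambda)$, not $(n\lambda,n,n\lambda,n)$, as the fourth entries $2^{2r-1}$ and $q^{2r-1}$ in Table~1 show; the same typo appears in Subsection~2.5 of the paper, and your later treatment of $\lambda$ as a free parameter shows you meant the former.
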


\begin{table}[h]

{\small
\begin{tabular}{|l|l|l|}
  \hline
  group $G$ & parameters &  associated group  \\
  \hline
	$Q_8^{[r]}$ & $(2^{2r},2,2^{2r},2^{2r-1},2,2^{2r-1}-2^r+2^{r-1},2^{2r-1}+2^{r-1})$  & $C_{3}$\\ \hline
  $H_{2r+1}(q)$, $q$ is odd & $(q^{2r},q,q^{2r},q^{2r-1},q,q^{2r-1}-q^r+q^{r-1},q^{2r-1}+q^{r-1})$  & $C_{q+1}$\\ \hline
  $E(q^{2r+1})$ & $(q^{2r},q,q^{2r},q^{2r-1},p^j-1,q^{2r-1}+q^r-q^{r-1},q^{2r-1}-q^{r-1})$, $j\leq i$ & $E(p^j)$\\  \hline
\end{tabular}
}
\caption{Closed linked systems of RDSs}
\end{table}

\section{Proof of Theorem~\ref{main}}

In this section, we prove Theorem~\ref{main}. Due to Lemma~\ref{criterion}, it suffices to prove the following statement.

\begin{theo}\label{main2}
A Higmanian scheme with parameters~$(f,m,n,k,t)$ is Q-Higmanian if and only if Eq.~\eqref{uniform} holds.
\end{theo}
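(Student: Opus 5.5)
We will prove Theorem~\ref{main2} by computing the first eigenmatrix $P$ of a Higmanian scheme $\mathcal{X}$ explicitly in terms of $(f,m,n,k,t)$. We then read off the Krein parameters $q_{ij}^k$ via Eq.~\eqref{krein} and the multiplicities via Eq.~\eqref{mult}, and check the defining conditions of a Q-Higmanian scheme directly.

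\emph{Eigenmatrix.} Order the basic relations as $R_0$ (diagonal), $R_1=E\setminus R_0$ (valency $n-1$), $R_2=F\setminus E$ (valency $(m-1)n$), $R_3=S$ and $R_4=T$.
\begin{itemize}
\item Three primitive idempotents lie in the span of the adjacency matrices of $F$ and $E$, since $\mathcal{X}_{\Omega/E}$ is a wreath product of two trivial schemes. They are $E_0$, the idempotent $E_4$ with $E_0+E_4=\frac{1}{n_F}A_F$, and an idempotent $E'$ with $E_0+E_4+E'=\frac{1}{n}A_E$. Their rows of $P$ are immediate from the valencies: $n_S=k(f-1)$, $n_T=(mn-k)(f-1)$, and the row of $E_4$ is $(1,n-1,(m-1)n,-k,-(mn-k))$. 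This gives $m_4=f-1$, so condition~(3) holds automatically for the pair $\{0,4\}$.
\item The remaining two idempotents $E'',E'''$ have eigenvalue $-1$ on $A_1$ and $n-1$ on $A_1+A_2$. Their eigenvalues on $A_S$ and $A_T$ are $\theta$ and $-\theta$, where $\theta$ is a root of a quadratic obtained from $A_SA_T=tA_T+(\ldots)$. Here we use the intersection numbers of $\mathcal{X}$ expressed through $(f,m,n,k,t)$ as in~\cite[Section~10]{Hig}. We expect the two roots to be $\theta_\pm$, which are explicit in $t$ and involve a square root.
\item The multiplicities $m''$ and $m'''$ then follow from Eq.~\eqref{mult}.
\end{itemize}

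\emph{Reduction of the Q-Higmanian condition.} We take $I=\{0,4\}$; it corresponds to the parabolic $F$, which is the only nontrivial parabolic with corank~$2$. Since $\mathcal{X}$ is symmetric, $q_{ij}^0\neq 0$ iff $i=j$, so $i\sim_I j$ for $i\neq j$ iff $q_{ij}^4\neq 0$. The identity $\sum_j q_{ij}^4=m_i$, together with $q_{i0}^4=0$ for $i\ne 4$, forces each of $E',E'',E'''$ to be linked via $q^4$ to something. We therefore check, by Eq.~\eqref{krein}, which of $q^4_{E'E'}$, $q^4_{E''E''}$, $q^4_{E'E''}$, $q^4_{E'E'''}$, $q^4_{E''E'''}$, and so on vanish.

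We expect the following:
\begin{itemize}
\item $q^4_{E'E''}=q^4_{E'E'''}=0$ automatically, by the block structure;
\item $q^4_{E'E'}\neq 0$, so $\{E'\}$ is a singleton class;
\item $q^4_{E''E'''}\neq 0$ always.
\end{itemize}
Hence Q-Higmanian-ness (with $l=2$, and the singleton $\{E'\}$ sitting in the middle) becomes equivalent to two conditions:
\begin{enumerate}
\item $q^4_{E''E''}=0$ or $q^4_{E'''E'''}=0$;
\item $m'''=(f-1)m''$ (or the symmetric condition).
\end{enumerate}
The case $l=1$ must be excluded separately, by showing $q^4_{E''E'''}\neq 0$.

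Each of these conditions is a polynomial identity in $\theta$. Substituting the quadratic for $\theta$ and eliminating, we will show that both conditions reduce to the same relation: $\theta^2$ equals $\frac{k(mn-k)}{m(n-1)}$ up to a fixed factor. This relation is equivalent to $t$ satisfying the quadratic whose roots are given by Eq.~\eqref{uniform}. The factor $f-2$ should appear naturally, because $q^4_{E''E''}$ contains a term $(f-2)$ from $\sum_l P_{4l}^3/n_l^2$.

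\emph{Main obstacle.} The principal difficulty is the explicit algebra in the last step. First, we must express $\theta_\pm$ correctly via $t$. Second, we must verify that the vanishing of the Krein parameter and the multiplicity condition are equivalent, rather than independent constraints. I would first compute $q^4_{E''E''}$ symbolically in $\theta$, solve for $\theta^2$, and then back-substitute into the quadratic relating $\theta$ and $t$ to obtain Eq.~\eqref{uniform}. Afterwards I would check that the multiplicity condition is then automatic.
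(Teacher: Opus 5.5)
Your plan follows the paper's route: explicit $P$, multiplicities from Eq.~\eqref{mult}, Krein parameters from Eq.~\eqref{krein} to force $I=\{0,4\}$, $l=2$ and the classes $\{0,4\},\{E'',E'''\},\{E'\}$, and then a multiplicity condition solved through the quadratic. However, your reduction step rests on a misreading of the definition, and there is also a slip in the eigenmatrix.

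\emph{The slip.} On $E''$ and $E'''$ the eigenvalue of $A_1+A_2=A_F-I$ is $-1$, not $n-1$, because $A_F=n_F(E_0+E_4)$ annihilates $E''$. So in your ordering the row is $(1,-1,0,\theta,-\theta)$. With $n-1$ the row would not sum to $0$, which contradicts your own $\theta,-\theta$.

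\emph{The misreading.} You note yourself that for $i\neq j$ one has $i\sim_I j$ iff $q^4_{ij}\neq 0$, and reflexivity comes from $q^0_{ii}=m_i$. So the diagonal parameters $q^4_{ii}$ play no role in the classes. The class $\{E'\}$ is a singleton because $q^4_{E'j}=0$ for $j\neq E'$, not because $q^4_{E'E'}\neq 0$. Likewise, your condition (1), $q^4_{E''E''}=0$ or $q^4_{E'''E'''}=0$, is not part of the definition of a Q-Higmanian scheme. Once the ordering is forced, being Q-Higmanian is equivalent to condition (2) alone, and that is exactly what the paper uses.

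Your extra condition is harmless only because here it happens to coincide with (2). Write $x_1,x_3$ for the values of $\theta$ on $E'',E'''$ and set $a=\frac{(f-1)k(mn-k)}{m(n-1)}$. Then use:
\begin{enumerate}
\item $\sum_j q^4_{ij}=m_i$;
\item the vanishing of $q^4_{E''j}$ for $j\in\{0,E',4\}$;
\item $q^4_{E''E'''}=\frac{m''m'''}{(f-1)m(n-1)}$, which uses $x_1x_3=-a$;
\item $m''+m'''=fm(n-1)$.
\end{enumerate}
Together these give
\[
q^4_{E''E''}=\frac{m''\bigl((f-1)m''-m'''\bigr)}{f(f-1)m(n-1)}.
\]
So your expectation that both conditions reduce to the same relation is correct, but as written the implication ``Q-Higmanian $\Rightarrow$ (1)'' is asserted from the definition rather than derived. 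You should drop (1), or prove it redundant via the identity above.

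Your ``main obstacle'' also largely disappears if you do not write $\theta_\pm$ in terms of $t$. By Vieta, $x_1x_3=-a$ and $x_1+x_3=\frac{tmn}{k}-(f-2)(mn-k)$. The condition $m'''=(f-1)m''$ becomes $x_1^2-(f-1)x_3^2=(f-2)a$, that is, $(x_1^2-(f-1)a)(x_1^2+a)=0$. Eq.~\eqref{uniform} follows immediately.
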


\begin{proof}
Let $\mathcal{X}$ be a Higmanian scheme with parameters~$(f,m,n,k,t)$, $E$ and $F$ proper nontrivial parabolics of $\mathcal{X}$ such that $E\subsetneq F$, and $S$ and $T$ basic relations of $\mathcal{X}$ outside $F$ such that $n_T\leq n_S$. Then $f=\frac{|\Omega|}{n_F}$, $m=\frac{n_F}{n_E}$, $n=n_E$, $k=|\alpha S\cap \Delta|,~\Delta\in \Omega/F$, and $t=p_{TS}^{T}$ are the parameters of $\mathcal{X}$.

Let us order the basic relations of $\mathcal{X}$ in the following way:
$$R_0=\diag(\Omega\times \Omega),~R_1=E\setminus R_0,~R_2=S,~R_3=T,~R_4=F\setminus E.$$

\begin{lemm}\label{eigenmatrix}
There exists an ordering of primitive idempotents $E_0,\ldots,E_4$ of $\mathcal{X}$ such that the first eigenmatrix of $\mathcal{X}$ with respect to this ordering and the above ordering of basic relations can be written as
$$
P={\footnotesize
\begin{pmatrix}
1&n-1&k(f-1)&(mn-k)(f-1)&mn-n \\
1&-1&x_1&-x_1&0 \\
1&n-1&0&0&-n \\
1&-1&x_3&-x_3&0 \\
1&n-1&-k&-mn+k&mn-n
\end{pmatrix}},
$$
where $x_1$ and $x_3$ are the roots of the equation
\begin{equation}\label{square}
x^2+((f-2)(mn-k)-\frac{tmn}{k})x-\frac{(f-1)k(mn-k)}{m(n-1)}=0
\end{equation}
and $|x_1|\geq |x_3|$.
\end{lemm}

\begin{proof}
Expressing the parameters of a Higmanian scheme from~\cite[Section~10,~p.~217]{Hig} via $m$, $n$, $k$, and $f$, replacing $x_1$ and $x_2$ from~\cite[Section~10,~p.~217]{Hig} by $x_1$ and $x_3$, respectively, if $|x_1|\geq |x_3|$ and by $x_3$ and $x_1$, respectively, otherwise, and reordering the primitive idempotents according to the permutation $(1243)\in \sym(\{0,\ldots,4\})$, we obtain the required.
\end{proof}

Observe that $x_1$ and $x_3$ are real because $\mathcal{X}$ is symmetric. Further throughout the proof, we assume that the basic relations and primitive idempotents of $\mathcal{X}$ are ordered as in Lemma~\ref{eigenmatrix}. The next lemma immediately follows from Eq.~\eqref{mult} and Lemma~\ref{eigenmatrix}.

\begin{lemm}\label{multiplicities}
In the above ordering, the multiplicities of $\mathcal{X}$ are
$$m_0=1,~m_2=f(m-1),~m_4=f-1,$$
$$m_1=\frac{f(f-1)m(n-1)k(mn-k)}{(f-1)k(mn-k)+x_1^2m(n-1)},~m_3=\frac{f(f-1)m(n-1)k(mn-k)}{(f-1)k(mn-k)+x_3^2m(n-1)}.$$
\end{lemm}

Observe that in the above ordering, the adjacency algebra $\mathcal{M}=\mathcal{M}(\mathcal{X})$ of $\mathcal{X}$ has exactly two proper nontrivial $\circ$-subalgebras, namely $\langle E_0,E_4 \rangle$ and $\langle E_0,E_2,E_4\rangle$ corresponding to parabolics $F$ and $E$, respectively. Put $I=\{0,4\}$. Using Eq.~\eqref{krein}, Lemma~\ref{eigenmatrix}, and Lemma~\ref{multiplicities}, it is easy to check that 
$$q_{ij}^0=0~\text{and}~q_{ij}^4=0$$
whenever $i\neq j$ and $2\in\{i,j\}$ or $\{i,j\}\cap \{0,4\}\neq \varnothing$ and $\{i,j\}\cap \{1,3\}\neq \varnothing$. Therefore the sets
$$\{0,4\},~\{1,3\},~\{2\}$$
are the unions of some classes of the equivalence relation~$\sim_I$. On the other hand, one can verify using Eq.~\eqref{krein}, Lemma~\ref{eigenmatrix}, and Lemma~\ref{multiplicities} that $q_{04}^4\neq 0$ and
$$q_{13}^4=0~\text{if and only if}~x_1x_3=\frac{(f-1)^2k(mn-k)}{m(n-1)}.$$
However, the latter equality contradicts to Lemma~\ref{eigenmatrix} because $x_1$ and $x_3$ are roots of Eq.~\eqref{square} and hence $x_1x_3=-\frac{(f-1)k(mn-k)}{m(n-1)}$. Therefore $q_{13}^4\neq 0$ and consequently the sets $\{0,4\}$ and $\{1,3\}$ are exactly equivalence classes of $\sim_I$. Thus, the first two conditions from the definition of a Q-Higmanian scheme hold for the ordering of the primitive idempotents from Lemma~\ref{eigenmatrix}, $I=\{0,4\}$, and $l=2$.

Suppose that $\mathcal{X}$ is Q-Higmanian. The ordering from Lemma~\ref{eigenmatrix} is the unique ordering satisfying the following conditions: (1) $\langle E_0,E_4 \rangle$ is a $\circ$-subalgebra of $\mathcal{M}$; (2) $m_4\geq m_0$, $m_3\geq m_1$ (the latter holds because $|x_1|\geq |x_3|$). So $\mathcal{X}$ can be Q-Higmanian only for this ordering and $I=\{0,4\}$. Since $\rk(\mathcal{X})=5$, the number $l$ from the definition of a Q-Higmanian scheme should be equal to~$1$ or~$2$. However, if $l=1$, then each of the sets $\{1\}$, $\{2\}$, and $\{3\}$ should be an equivalence class of $\sim_I$ which is not true because $q_{04}^4\neq 0$ and $q_{13}^4\neq 0$. Therefore $l=2$. Together with the previous paragraph, this implies that $\mathcal{X}$ is Q-Higmanian if and only if Condition~$(3)$ from the definition of a Q-Higmanian scheme holds for $m_i$'s, $f$, and $l=2$, i.e. 
$$\frac{m_4}{m_0}=\frac{m_3}{m_1}=f-1.$$
Thus, to prove Theorem~\ref{main2} it suffices to verify that the latter equalities hold if and only if Eq.~\eqref{uniform} holds.

The equality $\frac{m_4}{m_0}=f-1$ follows immediately from Lemma~\ref{multiplicities}. Using the expressions for $m_1$ and $m_3$ from Lemma~\ref{multiplicities}, we conclude that the equality $\frac{m_3}{m_1}=f-1$ is equivalent to 
\begin{equation}\label{l1}
x_1^2-(f-1)x_3^2=\frac{(f-1)(f-2)k(mn-k)}{m(n-1)}.
\end{equation}
On the other hand, $x_1$ and $x_3$ are the roots of Eq.~\eqref{square} and hence 
$$x_1x_3=-\frac{(f-1)k(mn-k)}{m(n-1)}.$$
As $f\geq 2$ and $k<mn$, we have $x_1,x_3\neq 0$. Expressing $x_3$ via $x_1$ and substituting this expression into Eq.~\eqref{l1}, we obtain the biquadratic equation
$$x_1^4-a(f-2)x_1^2-(f-1)a^2=0,$$
where $a=\frac{(f-1)k(mn-k)}{m(n-1)}$. Since $a>0$, the above equation has two real solutions, namely $\pm\sqrt{(f-1)a}$, and hence
\begin{equation}\label{l2}
(x_1,x_3)=\left(\sqrt{(f-1)a},-\sqrt{\frac{a}{f-1}}\right)~\text{or}~(x_1,x_3)=\left(-\sqrt{(f-1)a},\sqrt{\frac{a}{f-1}}\right).
\end{equation}
Observe that
$$x_1+x_3=-(f-2)(mn-k)+\frac{tmn}{k}$$
because $x_1$ and $x_3$ are the roots of Eq.~\eqref{square}. So
$$t=\frac{k}{mn}(x_1+x_3+(f-2)(mn-k)).$$
Together with Eq.~\eqref{l2}, the above equality implies Eq.~\eqref{uniform}. On the other hand, if Eq.~\eqref{uniform} holds, then substituting the expression for $t$ into Eq.~\eqref{square}, one can find $x_1$ and $x_3$ and verify that Eq.~\eqref{l2} holds which yields that Eq.~\eqref{l1} holds as desired.
\end{proof}

\section{Examples of uniform Higmanian Cayley schemes}

In this section, we provide examples of uniform Higmanian Cayley schemes. To do this, we construct uniform Higmanian $S$-rings whose corresponding Cayley schemes are uniform and Higmanian. 

\subsection{Example~1} 
Let $G$ be a finite abelian group and $X$ a divisible difference set in $G$ relative to a subgroup $N$ with parameters $(m,n,k,\lambda_1,\lambda_2)$. Suppose that $X$ satisfies the \emph{intersection condition}, i.e. the number $|X\cap Ng|$ does not depend on $g\in G$ or, equivalently, $X$ contains the same number of elements from each right $N$-coset. Then the partition of the generalized dihedral group $\langle G,u \rangle$, where $|u|=2$ and $u$ inverses every element of $G$, into the sets
$$T_0=\{e\},~T_1=N^\#,~T_2=G\setminus N,~T_3=Xu,~T_4=(G\setminus X)u$$
defines a Higmanian $S$-ring $\mathcal{A}$ over $\langle G,u \rangle$ by~\cite[Lemma~6.2]{Ry2}. The parameter $f$ of $\mathcal{A}$ is equal to $|\langle G,u \rangle:G|=2$. So the right-hand side of Eq.~\eqref{uniform} is equal to~$0$. One can see that $\underline{T_3}\cdot\underline{T_4}\in \mathbb{Z}G$ and hence $p_{T_3T_4}^{T_3}=p_{T_4T_3}^{T_4}=0$. This implies that the parameter $t$ of $\mathcal{A}$ is equal to~$0$. Therefore the left-hand side of Eq.~\eqref{uniform} is also equal to~$0$ and consequently Eq.~\eqref{uniform} is attained. Thus, $\mathcal{A}$ and the Higmanian scheme $\mathcal{X}(\mathcal{A})$ are uniform by Theorem~\ref{main}. The parameters of $\mathcal{A}$ and $\mathcal{X}(\mathcal{A})$ are $(2,m,n,k,0)$.

\subsection{Example~2} 

The second example is based on closed linked systems of relative difference sets. Let $G$ be a finite group, $N$ a subgroup of $G$, and $\mathcal{L}=\{X_\alpha:~\alpha\in W\}$ a closed linked system of semiregular RDSs in $G$ with forbidden subgroup $N$, parameters~$(n\lambda,n,n\lambda,\lambda,w,\mu,\nu)$, and a pair of characteristic functions~$(\chi,\psi)$. Suppose that $U$ is a group isomorphic to the group $W^\infty$ associated with $\mathcal{L}$, the operations $\widehat{\chi}$ and $\widehat{\psi}$ are defined as in Section~2.5, and $\varphi$ is an isomorphism from $U$ to $W^\infty$. By the definition, $|U|=w+1$. Since $\infty$ is an identity element of $W^\infty$, we have $\varphi(e)=\infty$ and hence $\varphi(U^\#)=W$. A partition $\mathcal{S}=\mathcal{S}(\varphi)$ of $G\times U$ is defined as follows:
$$T_0=\{e\},~T_1=N^\#,~T_2=G\setminus N,$$
$$T_3=\bigcup \limits_{u\in U^\#} uX_{\varphi(u)},~T_4=\bigcup \limits_{u\in U^\#} u(G\setminus X_{\varphi(u)}).$$ 
Put $\mathcal{A}=\mathcal{A}(\varphi)=\Span_{\mathbb{Z}}\{\underline{T}:~T\in \mathcal{S}\}$.

It is possible to check that the adjacency matrices of the binary relations $R(T_i)=\{(g,tg)\in (G\times U)^2:~g\in G\times U,~t\in T_i\}$, $i\in \{0,\ldots,4\}$, are of the forms from~\cite[Eqs.~(6)-(8)]{KhS} and the linked system of divisible designs corresponding to $\mathcal{L}$ satisfies the conditions of~\cite[Theorem~4.1]{KhS} which implies, in fact, that $\mathcal{A}$ is a Higmanian $S$-ring. However, in the context of this paper, it seems shorter and more natural to provide an explicit computation in the group ring $\mathbb{Z}G$ which yields the latter. 

\begin{prop}\label{higman}
The $\mathbb{Z}$-module $\mathcal{A}$ is a Higmanian $S$-ring over $G\times U$ with parameters 
$$(w+1,n\lambda,n,n\lambda(n-1),(n-1)(w-1)\nu).$$
\end{prop}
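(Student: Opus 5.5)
Our plan is to verify the $S$-ring axioms directly in $\mathbb{Z}[G\times U]$ and then read off the parameters. First we record the basic facts. The sets $T_0,\ldots,T_4$ partition $G\times U$. Here $T_3\cup T_4=(G\times U)\setminus G$, and each $X_\alpha$ is a semiregular RDS, so $|X_\alpha|=n\lambda=|G:N|$ and $|X_\alpha\cap Ng|=1$ for all $g$. Moreover, $T_0,T_1,T_2$ are closed under inversion. For $T_3$ we use that $U$ centralizes $G$ and $X_{\varphi(u)}^{(-1)}=X_{\chi(\varphi(u))}=X_{\varphi(u^{-1})}$, because $\varphi$ is an isomorphism onto $W^\infty$ and the inversion there is $\widehat{\chi}$. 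This gives $T_3^{(-1)}=T_3$, and then $T_4^{(-1)}=T_4$ follows by complementation. Hence the partition is symmetric and $\underline{T_0}=e$.

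The main step is closure under multiplication. Products among $\underline{T_0},\underline{T_1},\underline{T_2}$ lie in $\Span\{\underline{T_0},\underline{T_1},\underline{T_2}\}$ by Eq.~\eqref{easy}, since $N\le G$. For products with $\underline{T_3}$ we compute componentwise in $u$:
\[
\underline{N}\cdot\underline{uX_\alpha}=u\,\underline{NX_\alpha}=u\,\underline{G},\qquad
\underline{G}\cdot\underline{uX_\alpha}=n\lambda\,u\,\underline{G}.
\]
These show that $\underline{T_1}\,\underline{T_3}$ and $\underline{T_2}\,\underline{T_3}$ are integer combinations of $\underline{T_3}$ and $\underline{T_4}$. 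The products involving $\underline{T_4}$ are then handled by writing $\underline{T_4}=\underline{(G\times U)\setminus G}-\underline{T_3}$.

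The key product is $\underline{T_3}^2=\sum_{u,v}uv\,\underline{X_{\varphi(u)}}\,\underline{X_{\varphi(v)}}$, which we evaluate with Eq.~\eqref{linked}. Terms with $v=u^{-1}$ contribute $w(n\lambda e+\lambda\underline{G\setminus N})$. For the other terms, $uv\neq e$ and $\psi(\varphi(u),\varphi(v))=\varphi(uv)$ because $\varphi$ respects $\widehat\psi$. Thus each such term equals $uv(\mu\underline{X_{\varphi(uv)}}+\nu\underline{G\setminus X_{\varphi(uv)}})$. For a fixed $z\in U^\#$, the number of pairs with $uv=z$ and $u,v\neq e$ is $w-1$. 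Hence
\[
\underline{T_3}^2=wn\lambda\, e+w\lambda\,\underline{T_2}+(w-1)(\mu\,\underline{T_3}+\nu\,\underline{T_4}),
\]
which lies in $\mathcal{A}$. The remaining products follow by expanding $\underline{T_4}$ as above. The only real subtlety, which we expect to be the main obstacle, is bookkeeping that the bijection $\varphi$ correctly translates $\chi,\psi$ into the group operations of $U$.

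Next we identify the structure. The $\mathcal{A}$-subgroups are $L=N$ and $U':=G$, with $\mathcal{A}_N$ of rank~$2$, $\mathcal{A}_G$ of rank~$3$, and $\mathcal{A}_{(G\times U)/G}$ of rank~$2$ (its quotient classes are single points). The quotient modulo $N$ has rank~$3$, because the images of $T_3$ and $T_4$ coincide: each $X_\alpha$ meets every $N$-coset. Reading off the parameters gives $f=|G\times U:G|=w+1$, $m=|G:N|=n\lambda$ and $n=|N|$. Since $|T_4\cap Gu|=n\lambda(n-1)\ge n\lambda=|T_3\cap Gu|$, the larger set is $X=T_4$, so $k=n\lambda(n-1)$. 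The parameter $t=p_{T_3T_4}^{T_3}$ is obtained from the coefficient of $\underline{T_3}$ in $\underline{T_3}\,\underline{T_4}$. We have $\underline{T_3}\,\underline{(G\times U)\setminus G}=n\lambda\,\underline{T_3}\cdot\ldots$; more precisely, $\underline{uX}\cdot\underline{v G}=n\lambda\,uv\,\underline{G}$. The coefficient of $\underline{T_3}$ in $\underline{T_3}\,\underline{(G\times U)\setminus G}$ is therefore $n\lambda(w-1)$, and subtracting the coefficient $(w-1)\mu$ from $\underline{T_3}^2$ gives $t=(w-1)(n\lambda-\mu)$. Finally, we check with Eq.~\eqref{munu} that $n\lambda-\mu=(n-1)\nu$, since both sides equal $\lambda n-\lambda\mp\frac{(n-1)}{n}\sqrt{n\lambda}$. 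This yields $t=(n-1)(w-1)\nu$, as claimed.
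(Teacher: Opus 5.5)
Your proposal is correct and follows the paper's argument. You use the same symmetry check via $\widehat{\chi}$, the same group-ring products (with $\underline{T_3}^2$ evaluated through Eq.~\eqref{linked} and the group law of $W^\infty$), and the same $\mathcal{A}$-subgroups $N<G$. The only difference is the last step: you read $t=p_{T_3T_4}^{T_3}$ directly off $\underline{T_3}\cdot\underline{T_4}$ and then use $n\lambda-\mu=(n-1)\nu$ (which also follows by comparing coefficient sums in Eq.~\eqref{linked}), while the paper converts $p_{T_3T_3}^{T_4}=(w-1)\nu$ into $t$ via Eq.~\eqref{triangle0}; both routes are valid.
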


\begin{proof}
At first, let us check that $T^{(-1)}=T$ for each $T\in \mathcal{S}$. This is obvious for $T\in\{T_0,T_1,T_2\}$. One can see that given $u\in U$, 
$$(uX_{\varphi(u)})^{(-1)}=u^{-1}(X_{\varphi(u)})^{(-1)}=u^{-1}X_{\widehat{\chi}(\varphi(u))}=u^{-1}X_{\varphi(u^{-1})},$$
where the second equality holds by the definition of $\widehat{\chi}$, whereas the third one holds because $\varphi$ is an isomorphism from $U$ to $W^\infty$. The above equality implies that $T_3^{(-1)}=T_3$. So 
$$T_4^{(-1)}=((G\times U)\setminus (T_0\cup T_1 \cup T_2 \cup T_3))^{(-1)}=(G\times U)\setminus (T_0\cup T_1 \cup T_2 \cup T_3)=T_4$$
and we are done. 

Now let us verify that $\underline{T_i}\cdot\underline{T_j}\in \mathcal{A}$ for all $i,j\in\{0,1,2,3,4\}$. Since $\underline{T_4}=\underline{G\times U}-\sum \limits_{i=0}^{3} \underline{T_i}$, the above inclusion can be verified only for $i,j\in\{0,1,2,3\}$. If $i=0$ or $j=0$, then this is obvious. It is easy to check using Eq.~\eqref{easy} that
$$\underline{T_1}\cdot\underline{T_1}=(n-1)\underline{T_0}+(n-2)\underline{T_1},~\underline{T_1}\cdot\underline{T_2}=\underline{T_2}\cdot\underline{T_1}=(n-1)\underline{T_2},$$
$$\underline{T_2}\cdot\underline{T_2}=(n^2\lambda-n)(\underline{T_0}+\underline{T_1})+(n^2\lambda-2n)\underline{T_2}.$$
Since each $X_{\varphi(u)}$ is a transversal for $N$ in $G$, we have 
$$\underline{X}_{\varphi(u)}\cdot \underline{T_1}=\underline{T_1}\cdot\underline{X}_{\varphi(u)}=\underline{X}_{\varphi(u)}(\underline{N}-e)=\underline{G}-\underline{X}_{\varphi(u)}.$$ 
Therefore
$$\underline{T_3}\cdot\underline{T_1}=\underline{T_1}\cdot\underline{T_3}=\left(\sum\limits_{u\in U^\#} u\underline{X}_{\varphi(u)}\right)\cdot\underline{T_1}=\sum\limits_{u\in U^\#} u(\underline{G}-\underline{X}_{\varphi(u)})=\underline{T_4}$$
and
$$\underline{T_3}\cdot\underline{T_2}=\underline{T_2}\cdot\underline{T_3}=\left(\sum\limits_{u\in U^\#} u\underline{X}_{\varphi(u)}\right)\cdot(\underline{G}-\underline{N})=(n\lambda-1)\underline{U^\#G}=(n\lambda-1)(\underline{T_3}+\underline{T_4}).$$
Finally,
$$\underline{T_3}\cdot\underline{T_3}=\sum \limits_{u\in U^\#} \underline{X}_{\varphi(u)}\cdot \underline{X}_{\varphi(u^{-1})}+\sum \limits_{u\in U^\#} u\sum \limits_{v\in U^\#\setminus \{u\}} \underline{X}_{\varphi(v)}\cdot \underline{X}_{\varphi(v^{-1}u)}=$$
$$=\sum \limits_{u\in U^\#}\underline{X}_{\varphi(u)}\cdot \underline{X}_{\widehat{\chi}(\varphi(u))}+\sum \limits_{u\in U^\#} u\sum \limits_{v\in U^\#\setminus \{u\}} \underline{X}_{\varphi(v)}\cdot \underline{X}_{\widehat{\psi}(\widehat{\chi}(\varphi(v)),\varphi(u))}=$$
$$=\sum \limits_{u\in U^\#}\left(n\lambda e+\lambda(\underline{G}-\underline{N})\right)+\sum \limits_{u\in U^\#} u \sum \limits_{v\in U^\#\setminus \{u\}} \left(\mu \underline{X_{\varphi(u)}}+\nu\left(\underline{G}-\underline{X_{\varphi(u)}}\right)\right)=$$
$$=w\left(n\lambda e+\lambda(\underline{G}-\underline{N})\right)+(w-1)\sum \limits_{u\in U^\#} \left(\mu u\underline{X}_{\varphi(u)}+\nu u\left(\underline{G}- \underline{X}_{\varphi(u)}\right)\right)=$$
$$=wn\lambda\underline{T_0}+w\lambda\underline{T_2}+(w-1)\mu\underline{T_3}+(w-1)\nu \underline{T_4},$$
where the second equality holds because $\varphi$ is an isomorphism from $U$ to $W^\infty$, whereas the third equality holds because $\widehat{\psi}$ is associative and $\widehat{\chi}(\varphi(v))$ is the element inverse to~$\varphi(v)$ in the group $W^\infty$.

Thus, $\mathcal{A}$ is an $S$-ring over $G\times U$. Clearly, $\rk(\mathcal{A})=5$. One can see that $\rk(\mathcal{A}_N)=\rk(\mathcal{A}_{(G\times U)/G})=2$ and $\rk(\mathcal{A}_{(G\times U)/N})=\rk(\mathcal{A}_G)=3$. Therefore, $\mathcal{A}$ is Higmanian. Since every basic set of $\mathcal{A}$ is inverse-closed, $\mathcal{A}$ is symmetric and hence commutative.

One can see that $|T_3|=wn\lambda$ whereas $|T_4|=wn\lambda(n-1)$. So $|T_3|\leq |T_4|$. The first four parameters of $\mathcal{A}$ are equal to~$(w+1,n\lambda,n,n\lambda(n-1))$ by the definition of $\mathcal{A}$. The last parameter of $\mathcal{A}$ is $t=p_{T_3T_4}^{T_3}$. The above computation of $\underline{T_3}\cdot\underline{T_3}$ implies that $p_{T_3T_3}^{T_4}=(w-1)\nu$. Using Eq.~\eqref{triangle0} and symmetry of $\mathcal{A}$, one can compute that
$$t=p_{T_3T_4}^{T_3}=\frac{|T_4|}{|T_3|}p_{T_3T_3}^{T_4}=(n-1)(w-1)\nu$$
as required.
\end{proof}

In fact, the Higmanian $S$-ring $\mathcal{A}(\varphi)$ does not depend on the choice of~$\varphi$. Further we use the notation $T_i(\varphi)$ for the corresponding basic set of the $S$-ring $\mathcal{A}(\varphi)$. 

\begin{lemm}\label{cayleyisom}
Let $\varphi_1$ and $\varphi_2$ be isomorphisms from $U$ to $W^\infty$. Then the $S$-rings $\mathcal{A}(\varphi_1)$ and $\mathcal{A}(\varphi_2)$ are Cayley isomorphic.
\end{lemm}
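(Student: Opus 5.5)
The natural candidate is the map $\sigma=\id_G\times(\varphi_2^{-1}\circ\varphi_1)$ on $G\times U$, which sends $(g,u)$ to $(g,\varphi_2^{-1}(\varphi_1(u)))$. Since $\varphi_1$ and $\varphi_2$ are group isomorphisms from $U$ to $W^\infty$, the composition $\theta=\varphi_2^{-1}\circ\varphi_1$ is an automorphism of $U$. Hence $\sigma$ is an automorphism of the direct product $G\times U$. It therefore suffices to check that $\sigma$ maps each basic set $T_i(\varphi_1)$ onto $T_i(\varphi_2)$, $i\in\{0,\ldots,4\}$.

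\textbf{Sets inside $G$.} The sets $T_0$, $T_1$, $T_2$ lie in $G$ and do not depend on $\varphi$. Since $\sigma$ is the identity on $G$, we get $\sigma(T_i(\varphi_1))=T_i(\varphi_2)$ for $i\in\{0,1,2\}$.

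\textbf{The set $T_3$.} Since $\theta(e)=e$, $\theta$ permutes $U^\#$. For $u\in U^\#$ we have $\sigma(uX_{\varphi_1(u)})=\theta(u)X_{\varphi_1(u)}$. Writing $u'=\theta(u)$, we get $\varphi_2(u')=\varphi_1(u)$, so $\sigma(uX_{\varphi_1(u)})=u'X_{\varphi_2(u')}$. Taking the union over $u\in U^\#$, equivalently over $u'\in U^\#$, gives $\sigma(T_3(\varphi_1))=T_3(\varphi_2)$.

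\textbf{The set $T_4$.} The same computation applies with $G\setminus X_{\varphi(u)}$ in place of $X_{\varphi(u)}$. Alternatively, $T_4$ is the complement of $T_0\cup\dots\cup T_3$ and $\sigma$ is a bijection.

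This establishes the Cayley isomorphism. I do not expect a real obstacle here. The only point to keep clear is that only $U$ is reindexed while $G$ stays fixed, so that the cosets $u\cdot(\text{subset of }G)$ are carried to cosets of the same subset. This works precisely because the labels $X_\alpha$ are attached via $\varphi$ and $\varphi_2(\theta(u))=\varphi_1(u)$.
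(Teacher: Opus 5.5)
Your proof is correct and follows the paper's own argument: the paper uses the same automorphism of $G\times U$, equal to $\id_G$ on $G$ and to $\varphi_2^{-1}\circ\varphi_1$ on $U$. It likewise observes that $T_0,T_1,T_2$ are fixed, reindexes the union defining $T_3$ via $\varphi_2(\theta(u))=\varphi_1(u)$, and obtains $T_4$ by taking complements.
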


\begin{proof}
Let $\varphi(u)=\varphi_2^{-1}(\varphi_1(u))\in \aut(U)$ and $\widehat{\varphi}\in \aut(G\times U)$ be such that $\widehat{\varphi}^G=\id_G$ and $\widehat{\varphi}^U=\varphi$. Let us prove that $\widehat{\varphi}$ is a Cayley isomorphism from $\mathcal{A}(\varphi_1)$ to $\mathcal{A}(\varphi_2)$. It is easy to see that $\widehat{\varphi}(N^\#)=N^\#$ and $\widehat{\varphi}(G\setminus N)=G\setminus N$ because $\widehat{\varphi}^G=\id_G$. Further,
$$\widehat{\varphi}(T_3(\varphi_1))=\widehat{\varphi}(\bigcup \limits_{u\in U^\#} uX_{\varphi_1(u)})=\bigcup \limits_{u\in U^\#} \varphi(u)X_{\varphi_1(u)}=\bigcup \limits_{u\in U^\#} \varphi(u)X_{\varphi_2(\varphi(u))}=\bigcup \limits_{v\in U^\#} vX_{\varphi_2(v)}=T_3(\varphi_2),$$
where $v=\varphi(u)$ runs over $U^\#$ because $\varphi\in \aut(U)$. Finally,
$$\widehat{\varphi}(T_4(\varphi_1))=\widehat{\varphi}((G\times U)\setminus (G\cup T_3(\varphi_1)))=(G\times U)\setminus (G\cup T_3(\varphi_2))=T_4(\varphi_2).$$
Thus, $\widehat{\varphi}$ maps the basic sets of $\mathcal{A}(\varphi_1)$ to the basic sets of $\mathcal{A}(\varphi_2)$, i.e. $\widehat{\varphi}$ is a Cayley isomorphism from $\mathcal{A}(\varphi_1)$ to $\mathcal{A}(\varphi_2)$.
\end{proof}

Let $\mathcal{X}=\mathcal{X}(\mathcal{A})$ be the Cayley scheme corresponding to $\mathcal{A}$. Clearly, $\mathcal{X}$ is Higmanian. It follows almost immediately that it is also uniform. 

\begin{corl}\label{uniformlinked}
The $S$-ring $\mathcal{A}$ and the Cayley scheme $\mathcal{X}$ are uniform.
\end{corl}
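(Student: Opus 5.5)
The plan is to apply Lemma~\ref{uniformsring} (equivalently Theorem~\ref{main}) to the Higmanian $S$-ring $\mathcal{A}$ of Proposition~\ref{higman}. That is, I substitute its parameters
$$(f,m,n,k,t)=\bigl(w+1,\;n\lambda,\;n,\;n\lambda(n-1),\;(n-1)(w-1)\nu\bigr)$$
into Eq.~\eqref{uniform} and check that the identity holds. Uniformity of $\mathcal{X}=\mathcal{X}(\mathcal{A})$ then follows, because by definition $\mathcal{A}$ is uniform exactly when $\mathcal{X}(\mathcal{A})$ is, and the two have the same parameters.

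First I check the standing hypotheses $f,m,n\geq 2$. Since $w\geq 2$ we get $f=w+1\geq 3$. The forbidden subgroup $N$ is proper and nontrivial in the Higmanian structure, since $\rk(\mathcal{A}_N)=2$; hence $n\geq 2$ and $m=n\lambda\geq 2$.

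Next I compute the ingredients of the right-hand side of Eq.~\eqref{uniform}:
$$mn=n^2\lambda,\qquad mn-k=n\lambda,\qquad f-2=w-1.$$
The expression under the square root simplifies:
$$\frac{k(mn-k)}{m(n-1)}=\frac{n\lambda(n-1)\cdot n\lambda}{n\lambda(n-1)}=n\lambda.$$
Therefore the right-hand side equals
$$\frac{n\lambda(n-1)(w-1)}{n^2\lambda}\left(n\lambda\pm\sqrt{n\lambda}\right)=(n-1)(w-1)\cdot\frac{1}{n}\left(n\lambda\pm\sqrt{n\lambda}\right).$$

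Finally I compare this with Eq.~\eqref{munu}, which gives $\nu=\frac1n\bigl(n\lambda\mp\sqrt{n\lambda}\bigr)$. For whichever sign actually occurs in $\mathcal{L}$, the corresponding choice of sign in Eq.~\eqref{uniform} (the opposite symbol) yields exactly $(n-1)(w-1)\nu=t$. So Eq.~\eqref{uniform} holds, and Lemma~\ref{uniformsring} gives that $\mathcal{A}$ is uniform.

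The only delicate point is bookkeeping: making sure the roles of $S$ and $T$ match those in Proposition~\ref{higman}. There, $T_4$ is the larger set, so it plays the role of $S$ with $k=|T_4\cap Gg|$, and $t=p^{T_3}_{T_3T_4}$. I also need to be sure that the $\pm$ ambiguity in Eq.~\eqref{uniform} absorbs the $\mp$ ambiguity in Eq.~\eqref{munu}. Both points are settled by the computation above.
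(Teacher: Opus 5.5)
Your proposal is correct and follows the paper's proof essentially verbatim. You substitute the parameters from Proposition~\ref{higman} into Eq.~\eqref{uniform}, observe that $\frac{k(mn-k)}{m(n-1)}=n\lambda$ so the right-hand side becomes $(n-1)(w-1)\cdot\frac{1}{n}\bigl(n\lambda\pm\sqrt{n\lambda}\bigr)$, match this with $t=(n-1)(w-1)\nu$ using Eq.~\eqref{munu}, and conclude by Theorem~\ref{main} (Lemma~\ref{uniformsring}); your handling of the sign and of the roles of $T_3,T_4$ is accurate, and the check $f,m,n\geq 2$ is harmless since it holds automatically once $\mathcal{A}$ is Higmanian.
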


\begin{proof}
An explicit substitution of parameters~$(w+1,n\lambda,n,n\lambda(n-1),(n-1)(w-1)\nu)$ into Eq.~\eqref{uniform} leads to the equality
$$\nu=\frac{n\lambda\mp \sqrt{n\lambda}}{n}$$
which is the second equality from Eq.~\eqref{munu}. Therefore $\mathcal{A}$ and hence $\mathcal{X}$ are uniform by Theorem~\ref{main}.
\end{proof}

\begin{rem}
As was mentioned in the introduction, a connection between uniform and Q-polynomial schemes was studied in the paper~\cite{DMM}. In particular, in that paper it was proved that every uniform indecomposable scheme of rank~$4$ is Q-polynomial~\cite[Proposition~6.1]{DMM}. However, the latter does not hold for Higmanian schemes. One can verify using a criterion for Higmanian schemes to be Q-polynomial~\cite [Proposition~7.2]{DMM} that the uniform schemes from Proposition~\ref{higman} are not Q-polynomial. 
\end{rem}

The next corollary immediately follows from Lemma~\ref{linkedexamples}, Proposition~\ref{higman}, and Corollary~\ref{uniformlinked}.

\begin{corl}\label{higmangroup}
Let $q=p^i$ be a prime power and $r$ a positive integer. There is a uniform Higmanian Cayley scheme over a group $G$ from the first column of Table~$2$ with parameters from the second column of Table~$2$, respectively. 
\end{corl}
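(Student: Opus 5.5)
We simply specialize the general construction to the three families of linked systems from Lemma~\ref{linkedexamples}. For each row of Table~1, the group $G$ (namely $Q_8^{[r]}$, $H_{2r+1}(q)$ with $q$ odd, or $E(q^{2r+1})$) carries a closed linked system $\mathcal{L}$ of semiregular RDSs with forbidden subgroup $N$, parameters $(n\lambda,n,n\lambda,\lambda,w,\mu,\nu)$, and associated group $W^\infty$ isomorphic to $C_3$, $C_{q+1}$ or $E(p^j)$. We take $U$ to be a group isomorphic to $W^\infty$ and fix any isomorphism $\varphi\colon U\to W^\infty$. By Lemma~\ref{cayleyisom}, the choice of $\varphi$ is irrelevant up to Cayley isomorphism. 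The underlying group of the resulting scheme is $G\times U$, that is, $Q_8^{[r]}\times C_3$, $H_{2r+1}(q)\times C_{q+1}$, or $E(q^{2r+1})\times E(p^j)$; these should be the groups appearing in the first column of Table~2.

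Next, Proposition~\ref{higman} shows that $\mathcal{A}(\varphi)$ is a Higmanian $S$-ring over $G\times U$ with parameters $(w+1,n\lambda,n,n\lambda(n-1),(n-1)(w-1)\nu)$. Corollary~\ref{uniformlinked} then shows that $\mathcal{A}(\varphi)$ and the Cayley scheme $\mathcal{X}(\mathcal{A}(\varphi))$ are uniform. Since the correspondence $\mathcal{A}\mapsto\mathcal{X}(\mathcal{A})$ preserves parameters, the scheme has the same five parameters as the $S$-ring.

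It remains to substitute the entries of Table~1 into this parameter formula. For $Q_8^{[r]}$ we have $n=2$, $n\lambda=2^{2r}$, $w=2$ and $\nu=2^{2r-1}+2^{r-1}$, giving
\[
\bigl(3,\;2^{2r},\;2,\;2^{2r},\;2^{2r-1}+2^{r-1}\bigr).
\]
For $H_{2r+1}(q)$ we have $n=q$, $n\lambda=q^{2r}$, $w=q$ and $\nu=q^{2r-1}+q^{r-1}$, giving
\[
\bigl(q+1,\;q^{2r},\;q,\;q^{2r}(q-1),\;(q-1)^2(q^{2r-1}+q^{r-1})\bigr).
\]
For $E(q^{2r+1})$ we have $w=p^j-1$ and $\nu=q^{2r-1}-q^{r-1}$, giving
\[
\bigl(p^j,\;q^{2r},\;q,\;q^{2r}(q-1),\;(q-1)(p^j-2)(q^{2r-1}-q^{r-1})\bigr).
\]
These values should match the second column of Table~2.

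The only place where care is needed is this bookkeeping. In particular, the convention $|Y|\le|X|$ in the definition of $t$ must be the one used in Proposition~\ref{higman}, which it is, since $|T_3|\le|T_4|$. The sign of $\nu$ in the third row also differs from the first two, in accordance with Eq.~\eqref{munu}; this is harmless because the criterion~\eqref{uniform} allows both signs. No further argument is needed beyond this matching.
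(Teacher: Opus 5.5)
Your proposal is correct and is exactly the paper's argument, which just cites Lemma~\ref{linkedexamples}, Proposition~\ref{higman} and Corollary~\ref{uniformlinked}; your three substitutions do reproduce the second column of Table~2 after factoring (e.g.\ $2^{2r-1}+2^{r-1}=2^{r-1}(2^r+1)$ and $(q-1)(p^j-2)(q^{2r-1}-q^{r-1})=q^{r-1}(q-1)(q^r-1)(p^j-2)$). Your group $Q_8^{[r]}\times C_3$ in the first row is the right one: the scheme has $fmn=3\cdot 2^{2r+1}$ points, so the bare entry $Q_8^{[r]}$ in Table~2 is a typo.
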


\begin{table}[h]

{\small
\begin{tabular}{|l|l|}
  \hline
  group $G$ & parameters   \\
  \hline
	$Q_8^{[r]}$ & $(3,2^{2r},2,2^{2r},2^{r-1}(2^r+1))$ \\ \hline
  $H_{2r+1}(q)\times C_{q+1}$, $q$ is odd & $(q+1,q^{2r},q,q^{2r}(q-1),q^{r-1}(q-1)^2(q^r+1))$ \\ \hline
  $E(q^{2r+1})\times E(p^j)$, $j\leq i$ & $(p^j,q^{2r},q,q^{2r}(q-1),q^{r-1}(q-1)(q^r-1)(p^j-2))$ \\  \hline
\end{tabular}
}
\caption{Parameters of uniform Higmanian Cayley schemes.}
\end{table}

\end{document}